%% file: paperV5.tex
\documentclass{article}

\input{command}

\usepackage{authblk}

\usepackage[style=numeric,backend=biber,maxnames=999]{biblatex}
\begin{document}
	\title{On the involution preserving properties   of semi-discrete Active Flux methods on simplex.}
\date{}
 \author[1]{R. Abgrall} 
 \author[2]{Y. Liu}
 \author[3]{M. Ricchiuto} 
 
 \affil[1]{Institute of Mathematics, University of Z\"urich, Switzerland}
 \affil[2]{University of Trento, Italy and University of Science and Technology, Shenzhen, China}
 \affil[3]{INRIA, Univ. Bordeaux, CNRS, Bordeaux INP, IMB, UMR 5251, 200 Avenue de la Vieille Tour, 33405 Talence cedex, France }
\date{\today}
\maketitle
\begin{abstract}
We study the involution-preserving properties of the Active Flux method on unstructured triangular meshes. Two model problems are considered: the two-dimensional linear acoustic system, which satisfies a curl involution on the velocity field, and the two-dimensional Maxwell equations with an in-plane magnetic field, which satisfy a divergence involution. Both systems can be written in conservative form. The question we address is how a single discretization can satisfy the conservative form and the involution at the same time. Within a discrete Active Flux framework on simplices, we derive explicit formulas for a discrete curl operator that is exactly preserved by the scheme for acoustics, and for a discrete divergence operator preserved for the Maxwell system. The analysis applies to general simplex meshes and to any polynomial degree, including extensions of the method beyond third order. Numerical experiments with the third-order scheme confirm the theoretical results.

\paragraph{Highlights}
\begin{itemize}
\item Active Flux on unstructured triangular meshes preserves discrete involutions,
\item Explicit discrete curl operator preserved by the scheme for linear acoustics,
\item Explicit discrete divergence operator preserved for 2D Maxwell equations,
\item Results hold on general simplex meshes,
\item Third-order numerical experiments confirm the theoretical results.
\end{itemize}
\end{abstract}

\section{Introduction}
We are interested in how to discretize    hyperbolic systems endowed with  involution. We focus on
the example of the two-dimensional  linear  acoustic system with a curl involution, but the same ideas can be easily extended to three space dimensions and to   the Maxwell equations with solenoidal magnetic fields. 

We start by introducing a few notations. Real numbers are denoted by non bold characters, like $a$. Vectors are denoted by bold characters, such as $\bba$, and matrices by bold capital letters, such as $\bbA$. If $\varphi$ is real valued and regular enough, its gradient will be denoted by $\nabla \varphi$ or $\grad \varphi$. In two space dimensions, we define the curl of the scalar $\varphi$ as  the vector $$\curl\varphi=\big (\dpar{\varphi}{y},-\dpar{\varphi}{x}\big ).$$ Conversely, if $\bm{\varphi}=(\varphi_1,\varphi_2)$ is vector valued, its rotational  is  the scalar $$\rot{\bm \varphi}=\dpar{\varphi_2}{x}
 -\dpar{\varphi_1}{y}.$$ Finally, its divergence is $$\div\bm{\varphi}=\dpar{\varphi_1}{x} +\dpar{\varphi_2}{y}.$$
 We have the relation $\div\curl=0$, $\rot\grad=0$ and the related exact sequences of the de Rham complex.

\subsection{PDE systems and involutions}

We consider here two model systems.
The acoustic problem writes (for unit density and unit speed of sound)
\begin{equation}\label{eq:1}
\begin{array}{ll}
\dpar{\bbu}{t}+\nabla p=0\\[10pt]
\dpar{p}{t}+  \text{ div }\bbu=0
\end{array}\qquad \forall\mathbf{x}\in\Omega\subset\mathbb{R}^2\;,\forall\,t \ge 0
\end{equation}
where in this work $\bbu=(u_1,u_2)\in \R^2$, and $p$ is  a scalar. The system \eqref{eq:1} is supplemented by initial conditions on the velocity and the pressure:
$$\bbu(\bbx,0)=\bbu_0(\bbx)\qquad p(\bbx,0)=p_0(\bbx).$$
As a consequence of the identity $\rot\grad=0$,
the system \eqref{eq:1} also satisfies, for any closed curve smooth enough curve $\Gamma$, the  curl involution (in integral form)
\begin{equation}\label{stokes}\int_\Gamma\bbu\cdot d\btau=\int_\Gamma\bbu_0\cdot d\btau.\end{equation}

\bigskip

We also consider the two dimensional system of Maxwell equations reading  (for unit speed of light)
\begin{equation}\label{eq:max1}
\begin{array}{ll}
\dpar{\mathbf{B}}{t}+ \curl E =0\\[10pt]
\dpar{E}{t} - \text{ rot }\mathbf{B} =0
\end{array} \qquad \forall\mathbf{x}\in\Omega\subset\mathbb{R}^2\;,\forall\,t \ge 0
\end{equation}
where $E$ is the (scalar) vertical component of the electric field, while  $\bbB=(B_1,B_2,0)^T$
is the magnetic field, assumed to lay in the horizontal plane.  As for acoustics, system \eqref{eq:max1} is also supplemented by initial conditions 
$$
\mathbf{B}(\mathbf{x},0)=\mathbf{B}_0(\mathbf{x})\quad
E(\mathbf{x},0)=E_0(\mathbf{x}).
$$
As a consequence of the identity $\text{div}\,\curl =0$, the system \eqref{eq:max1} satisfies, for any closed smooth enough curve $\Gamma$, the div involution (in integral form)
\begin{equation}\label{gauss}\int_\Gamma\mathbf{B}\cdot d\mathbf{n}=\int_\Gamma\mathbf{B}_0\cdot d\mathbf{n}.\end{equation}

Note that both systems above can be written in conservative form as
\begin{equation}\label{eq:con}
\dpar{\mathbf{U}}{t} +\nabla\cdot\mathbf{f} =0
\end{equation}
where   for acoustics we have
$$
 \mathbf{U}=\left(\begin{array}{c} \bbu\\ p\end{array}\right) \;,\quad
 \mathbf{f} = \left(\begin{array}{cc} p & 0\\0& p\\u_1&u_2\end{array}\right),
$$
while 
$$
 \mathbf{U}=\left(\begin{array}{c} \mathbf{B}\\ E\end{array}\right) \;,\quad
 \mathbf{f} = \left(\begin{array}{cc} 0 & E\\-E& 0\\-B_2&B_1\end{array}\right),
$$
for the Maxwell system.
This work deals with the design of methods   that satisfy simultaneously  
the conservative form  \eqref{eq:con}, and the corresponding
involution \eqref{stokes} or   \eqref{gauss}, possibly with 
   an  explicit definition of the
   appropriate 
discrete curl and   div operators. 
Of course, for a linear problem, this is not very important, but we want to investigate on simple problems how to manage together these two constraints.

\subsection{A (partial) state of the art}

The numerical approximation of PDE problems embedding involutions has seen over the years
many contributions. The earliest are  related to  so-called mimetic finite difference operators see e.g.  
\cite{KYee} and \cite{HYMAN199781,MARGOLIN2000365,LIPNIKOV20141163}.  Some of the notions behind these methods naturally generalize to polygonal meshes.  Some of the ideas underpinning these finite difference 
schemes have been  embedded over the years in constraint preserving/compatible finite volume methods \cite{BALSARA1999270,BALSARA2017604,balsara_camc23,DelRio25}. Of course, another natural setting
to obtain discretizations compatible with differential constraints is provided by 
compatible and mixed finite element spaces \cite{arnold,NedelecI,NedelecII}. 
The issue of the curl preservation has become more and more central 
for the design of robust Lagrangian methods 
\cite{SIDILKOVER2025113510,ovadia,MAIRE20092391,sambasivan2013finite}.
In this setting, the benefits of  
genuinely multidimensional methods has become more and 
more fundamental \cite{BALSARA2014163,BALSARA2015269,Barsukow,GALLICE2022111493,AbgrallM3AS,GABURRO2026107030,ABGRALL2025114340}.
The development of such methods has benefited in several respects of seminal ideas in
early works by Morton and Roe  on vorticity preserving schemes\cite{MortonRoe}, and P.L. Roe and H. Deconinck
on multidimensional upwinding  \cite{DECONINCK1993215}.
Other techniques have also surfaced, based on appropriate enrichment of the approximation spaces
for finite volumes and discontinuous Galerkin
 \cite{JUNG2024113252,VPerrier}, 
or on formulations based on flux potentials \cite{jeltsch06,mishra09preprint}.
Curl preservation has also been shown to be tightly related to the reduction of mesh imprinting \cite{lung14}, and to the existence of a rich stationary kernel (stationary preservation) \cite{Barsukow2019}. This has also led to other methods based on
 multidimensional quadrature techniques \cite{barsukow2024structure,barsukow2026stationarity,barsukow2026genuinely,ziggaf2026arbitrary}, which provide one possible  generalization of   the method by Morton and Roe.\\

One approach which fits perfectly in this panorama is the Active Flux method. 
Previous literature has shown that Active Flux has stationary preserving features,
which lead to involution preservation in the linear case of acoustics.
These features have been shown   either experimentally or by Fourier analysis
\cite{Barsukow2023,Barsukow2026}.
 In the case of unstructured meshes, it seems that the methods is also friendly with respect to \eqref{stokes}, i.e. one can expect the existence of  a discrete version of \eqref{stokes}, a little bit like in   \cite{MortonRoe}. However the mere definition of this discrete version of the curl is not known.

\subsection{Contribution of this paper}

The purpose of this work is to study the involution preserving properties of active flux 
on unstructured meshes. In particular, we consider   the discrete active flux setting
described in \cite{Abgrall2023b}  and exhibit    discrete versions of \eqref{stokes} for acoustics, and \eqref{gauss} for the Maxwell system. 
Despite the unstructured grid setting, we are able to provide explicit formulas
for the discrete curl and div operators preserved by the method.  
The theory can be applied to simplex meshes, and to any degree of approximation.
Numerical results   for the third order scheme are provided    to  support the theoretical claim. 

The paper is organized as follows. We first discuss in Section \S2 the discrete polynomial approximation of the variables on triangles. Then, in Section  \S3 we present the Active Flux method for the linear acoustics system and Maxwell equations, and a generalization to orders higher than three. Section \S4 provides the main results of the paper, namely explicit 
discrete definitions of the curl and div which are  involutions  for the Active Flux in the case of the acoustics system and Maxwell equations respectively.
Numerical results to support the theory are provided in Section \S5. The paper is ended by remarks and open avenues to improve the results discussed.

\section{Approximation: triangular case}
The domain $\Omega$ is covered by a family of non overlapping polygons denoted by $K$. This paper focuses on the case in which
the polygons are triangles. Some comments  on the general case are provided along the text.
 We assume that the mesh is regular. The symbol $e$ will denote any edge of $K$, and $\btau_e$ (or $\btau$ if there is no ambiguity), is the tangent unit vector at $e$. Similarly, $\bbn_e$ is the outward normal unit vector of the edge $e$ seen from $K$.
 In practice, we will choose one orientation of the edges of the triangulation, and will choose $\btau_e$ and $\bbn_e$  accordingly so that $(\btau_e,\bbn_e)$ is direct.
 
 
Let us start from the lowest order case: linear and quadratic.  
The unknown $\mathbf{U}$ is  approximated by means of globally continuous polynomials. The approximation involves collocated values 
    at every vertex of the mesh, and in the quadratic case, at the mid-points of the edges: these are the Lagrange points. In addition we take as independent degree of freedom the average in the element. Hence, denoting for a generic triangle $K$ with vertices $\{\bba_1,\bba_2,\bba_3\}$ the barycentric coordinates by $\lambda_1, \lambda_2,\lambda_3$, we consider 
 $\bar\varphi=60\lambda_1\lambda_2\lambda_3$. The indexation of the vertices respects the positive orientation of $K$.This function vanishes on the boundary of $K$ and
 $$\int_K\bar\varphi \; d\bbx=\vert K\vert.$$
 Taking $\{\psi_i\}_{i=1, N}$ the Lagrange polynomials (so $N=3$ and $\psi_i=\lambda_i$ or $N=6$), we define
 $$\varphi_i=\psi_i-\frac{\int_K\psi_i\; d\bbx}{\vert K\vert}\bar\varphi.$$
 These functions are still interpolant and have a zero mass in $K$. The unknown is thus approximated by
 \begin{equation}
 \label{approx:pression}
 \mathbf{U}=\sum_{i=1}^N \mathbf{U}(\bba_i)\varphi_i+\overline{ \mathbf{U}}_K\bar\varphi.
 \end{equation}

\subsection{Higher order extension}
The higher order extension is done using the notation introduced in
\cite{HO_AB_WB} for the general polygonal case, and detailed in 
 \cite{OEPampa} and \cite{pampa_dg} for the triangular case. Similar ideas are used in  \cite{WB_PC_CK_LL}  for Cartesian meshes. 
 
 We want to have a globally continuous approximation which reduces to polynomials of degree $k>2$ on the boundary of each element.  We also want that the approximation space contains $\P^k$ for the accuracy.  We denote the vertices by $\bba_i$, $i=1,2,3$ and $\lambda_i$ the barycentric coordinates of $K$. The DoFs are these vertices and the other $k-1$ Gauss--Lobatto points on the three edges. To these we add  $k(k-1)/2$  moments \cite{BP_Pampa_VEM}. All of these DoFs lead to an  approximation space with local dimension $N_{\dof}=3+3(k-1)+k(k-1)/2=3k+k(k-1)/2$. 

The dimension of $\P^k$ is $(k+1)(k+2)/2$. The number of Lagrange points on each edge, taking into account the vertices as Lagrange points, is $3k$. We have
$$3k+k(k-1)/2-3k=\frac{k(k-1)}{2}={\text{dim}\P^{k-2}}.$$
These equalities can be rephrased as follows. We set $B_\bmu=c_\bmu \lambda_1^{\mu_1}\lambda_2^{\mu_2}\lambda_3^{\mu_3}$ the B\'ezier polynomials. For the sake of simplicity, we will write
$$\lambda_1^{\mu_1}\lambda_2^{\mu_2}\lambda_3^{\mu_3}=\blambda^\bmu,$$
with   $\bmu$ the multi-index $\bmu=(\mu_1,\mu_2,\mu_3)$. We set $\vert \bmu\vert=\mu_1+\mu_2+\mu_3$.  The coefficients $c_\bmu$ are binomial coefficients so that the $B_\bmu$s can be seen as the monomials             in the development of $(\lambda_1+\lambda_2+\lambda_3)^k$.  We define the two vector spaces:
$$V_1=\text{span}\{ B_\bmu, \vert \bmu\vert=k \text{ and } \mu_1\mu_2\mu_3=0\}$$
and
$$V_2=\text{span}\{ B_\bmu, \vert \bmu\vert=k \text{ and } \mu_1\mu_2\mu_3>0\}.$$
Clearly, $\P^k\subset V_1\bigoplus V_2$. The elements of $V_2$ vanish on the boundary of $K$. For the sake of clarity, we will denote the generator of $V_2$ with a specific notation
$$\psi_{\bmu}=\lambda_1\lambda_2\lambda_3 B_{\bmu}$$ 
with
\begin{itemize}
\item if $1\leq k\leq 2$, $B_{\bmu}=1$,
\item and for $k\geq 3$, $B_{\bmu}=\blambda^{\bmu}$ with 
$$\vert \bmu\vert=k-2.$$
\end{itemize}
It is clear that $\text{dim }V_1=3k$ and $\text{dim }V_2=\text{ dim }\P^{k-2}$. We define
$$\mathcal{I}_k=\left \{ \begin{array}{ll}
\{\bmu, \vert \bmu\vert=k-2\}&\text{ for }k>2\\
\{(0,0,0)\}  &\text{ for }k\leq 2.
\end{array}
\right .$$
The cardinal of $\mathcal{I}_k$ is {$\text{ dim }\P^{k-2}$}. 
We will often write $b=\lambda_1\lambda_2\lambda_3$.

\bigskip

\noindent We introduce the following polynomial spaces:
\begin{itemize}
\item For $k\leq 2$, $V=\P^k\bigoplus b\R$,
\item and for $k>2$, $V=\P^k \bigoplus b\P^{k-2}$
\end{itemize}
For the approximation of  a generic variable $p$, we introduce the following degrees of freedom:\begin{subequations}\label{linear forms}
\begin{itemize}
\item For the Gauss-Lobatto points $\sigma$ on the boundary of $K$,
\begin{equation}\langle \delta_\sigma, p\rangle =p(\sigma).\label{linear form points}\end{equation}
The set of Gauss-Lobatto points is denoted by $\mathcal{G}$ and contains $3k$ points.
\item In the interior,
\begin{equation}\langle m_{\bmu}, p\rangle =\frac{1}{\vert K\vert}\int_K \blambda^{\bmu}(\bbx)p(\bbx)\; \mathrm d\bbx\label{linear forms moment}\end{equation}
for $\bmu\in \mathcal{I}_k$.
\end{itemize}\end{subequations}
They are linear forms on $V$.
\begin{lemma}\label{lem:unisolv}
The linear forms \eqref{linear form points}-\eqref{linear forms moment} are uni-solvant on $V$.
\end{lemma}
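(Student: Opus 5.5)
The plan is to combine a dimension count with an injectivity argument. First I check that the number of linear forms equals $\dim V$. For $k>2$, the sum $\P^k+ b\P^{k-2}$ is not direct, since $b\P^{k-3}\subset\P^k$. However, $b\P^{k-2}\subset \P^{k+1}$, and $V$ has a natural description as the span of $V_1\oplus V_2$ in which $V_2=b\P^{k-2}$ is taken \emph{as a complement} of the boundary-trace part. I will therefore use the description $V=W\oplus b\P^{k-2}$, where $W$ is any space of polynomials whose traces on $\partial K$ realise all continuous piecewise-$\P^k$ boundary data. The space $V_1$ has dimension $3k$ and is such a space. This gives $\dim V=3k+\dim\P^{k-2}$, which is exactly the number of forms in \eqref{linear form points}-\eqref{linear forms moment}. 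For $k\le2$ the same count reads $3k+1$, since $\mathcal I_k=\{(0,0,0)\}$. So it suffices to prove injectivity: if $p\in V$ has all DoFs equal to zero, then $p=0$.

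Take such a $p$. Its restriction to each edge $e$ is a polynomial of degree at most $k$ in one variable, because elements of $b\P^{k-2}$ vanish on $\partial K$. This polynomial vanishes at the $k+1$ Gauss--Lobatto points of $e$, the two endpoints together with the $k-1$ interior points, so $p|_{\partial K}=0$.

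The key step, and the one I expect to be the main obstacle, is to show that a $p\in V$ vanishing on $\partial K$ lies in $b\P^{k-2}$. Once this is shown, the vanishing moments finish the argument quickly. For $p=bq$ with $q\in\P^{k-2}$, the moment conditions give
\[
\int_K b\,q\,\blambda^{\bmu}\,d\bbx=0\qquad \text{for all } \bmu\in\mathcal I_k .
\]
The monomials $\blambda^{\bmu}$ with $|\bmu|=k-2$ span $\P^{k-2}$, so I may take the test function equal to $q$ itself. This yields $\int_K b\,q^2\,d\bbx=0$. Since $b>0$ in the interior of $K$, it follows that $q=0$ and hence $p=0$.

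For the key step I would argue in the Bézier basis. Write $p=v_1+v_2$ with $v_1\in V_1$ and $v_2\in V_2$. Then $v_2$ vanishes on $\partial K$, so $v_1|_{\partial K}=0$. Restricted to the edge $\lambda_i=0$, the Bézier polynomials $B_\bmu$ of $V_1$ with $\mu_i=0$ reduce to the univariate Bernstein basis of degree $k$, which is linearly independent; the remaining ones vanish there. Hence every coefficient of $v_1$ on that edge vanishes, and this holds for every edge. Since every $\bmu$ with $\mu_1\mu_2\mu_3=0$ has some $\mu_i=0$, all coefficients of $v_1$ are zero, so $v_1=0$ and $p=v_2\in b\P^{k-2}$. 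The delicate point is exactly this identification of $V$ with $V_1\oplus V_2$, together with the use of the trace map $V_1\to$ (boundary data), which is an isomorphism by the Bernstein-basis argument. I would state explicitly that this is the reading of $V$ used in the proof.
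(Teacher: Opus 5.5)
Your proof is correct, but it runs in the opposite direction from the paper's. The paper works in the dual. It assumes a combination $\sum_{\sigma}\alpha_\sigma\delta_\sigma+\sum_{\bmu'}\alpha_{\bmu'}m_{\bmu'}$ vanishes on $V$. It removes the $\alpha_\sigma$ by testing with boundary-interpolating functions $\varphi_\sigma$ that have zero moments. It then removes the $\alpha_{\bmu'}$ by testing with $\psi_{\bmu}=b\,\blambda^{\bmu}$, which reduces to positive definiteness of $A_{\bmu\bmu'}=\int_K b\,\blambda^{\bmu}\blambda^{\bmu'}\,d\bbx$. You instead show that the DoF map is injective on $V$ and close with a dimension count.

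The ingredients are the same in both: unisolvence of the $3k$ Gauss--Lobatto values on a boundary trace space, and positivity of $\int_K b\,q^2\,d\bbx$ on $\P^{k-2}$. Your ordering has the advantage that you never construct dual functions. This matters because the paper's formula for $\varphi_\sigma$ subtracts $\frac{m_{\bmu}(L_\sigma)}{m_{\bmu}(\psi_{\bmu})}\psi_{\bmu}$ term by term. That kills all moments when $A$ is diagonal, but not in general. For $k\ge 3$ every entry of $A$ is positive, so the correction has to be computed with $A^{-1}$; the positive definiteness the paper proves afterwards is therefore needed first.

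Your write-up also makes explicit two things the paper leaves implicit. The existence of the boundary Lagrange functions $L_\sigma$ is exactly your Bernstein trace argument. The count ``number of forms $=\dim V$'' is what turns independence of the forms into unisolvence, and the paper hides it behind a $\oplus$ that is not direct for $k>2$. What the paper's route gives in return, once corrected, is constructive: the actual nodal and moment basis functions, as in its $k=3$ example.

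You can drop the caveat at the end, because $V=V_1\oplus b\P^{k-2}$ is a fact, not a reading. Every degree-$k$ Bernstein polynomial with $\mu_1\mu_2\mu_3>0$ has the form $b\,\blambda^{\bmu'}$ with $|\bmu'|=k-3$. It therefore lies in $b\P^{k-3}\subset b\P^{k-2}$, so $\P^k+b\P^{k-2}=V_1+b\P^{k-2}$, and this sum is direct by your own trace argument. Equivalently, since $\P^k\cap b\P^{k-2}=b\P^{k-3}$, one has $\dim(\P^k+b\P^{k-2})=\dim\P^k+\dim\P^{k-2}-\dim\P^{k-3}=3k+\dim\P^{k-2}$.
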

\begin{proof}
See appendix \ref{app:lemma}.
\end{proof}

\begin{figure}
\begin{center}
\begin{tabular}{c}
\input{GL_tri.tikz}
\end{tabular}
\end{center}\label{fig:1}
\caption{Interpolation points for the triangular cubic case.}
\end{figure}

As shown in the appendix, 
for example for $k=3$  the polynomials are
\begin{itemize}
    \item Moment $\lambda_1$: $\varphi_{(1,0,0)}=\lambda_1\lambda_2\lambda_3(a_1\lambda_1+a_2\lambda_2+a_3\lambda_3)$.
    \item Moment $\lambda_2$: $\varphi_{(0,1,0)}=\lambda_1\lambda_2\lambda_3(a_3\lambda_1+a_1\lambda_2+a_2\lambda_3)$.
    \item Moment $\lambda_3$: $\varphi_{(0,0,1)}=\lambda_1\lambda_2\lambda_3(a_2\lambda_1+a_3\lambda_2+a_1\lambda_3)$.
    \end{itemize}
Note that 
$$
\varphi_{(1,0,0)}+\varphi_{(0,1,0)}
+\varphi_{0,0,1)}=
(a_1+a_2+a_3) b
$$
which means that the sum of the three corresponding DoF is a bubble representing the average.


 \section{Discretization of the PDEs}
 \subsection{Third order scheme}
 The average will evolve by:
 \begin{equation}
     \label{average}
     \vert K \vert \dfrac{d\overline{\mathbf{U}}_K}{dt}+\int_{\partial K}\bbf(\mathbf{U})\bbn \;d\gamma=0
 \end{equation}
 Since $\mathbf{U}$ is  
 known on the boundary (it is quadratic), one can exactly evaluate the integral by using the Simpson formula on each edge. This is what we do.

 For the point value update, we proceed as in \cite{Abgrall2023b}:
 \begin{equation}
     \label{point values}
     \dfrac{d\mathbf{U}_\bsigma}{dt}+\sum_{K, \bsigma\in K} \bomega_\bsigma^K
     \nabla\cdot\mathbf{f}_{\mid K}(\bsigma)
=0
 \end{equation}
 where
 \begin{itemize}
     \item $\mathbf{f}_{\mid K}$ is the flux 
     polynomial (in our case the polynomial approximation of the velocity and the pressure, resp. of the magnetic field and of the electric field)
     in $K$,
     \item $\bbn_\bsigma^K$ is the normal defined by
     \begin{itemize}
\item For $\bsigma=1, 2, 3$, this is the normal to the side opposite to the vertex $\bba_i$,
\item for $\bsigma=4, 5, 6$, if they are considered, this is the outward normal of the side on which the degree of freedom is sitting.
\end{itemize}
The normal also depends on $K$, so we name then $\bbn_\bsigma^K$.
     \item $\bomega_\bsigma^K$ is the matrix defined by
     $$\bomega_\bsigma^K= \bbN\bigg ( \big (\bbK\bbn_\bsigma^K\big )^++\varepsilon \Id\bigg )$$
     with $\bbK\bbn$ the matrix
     $$
     \bbK\bbn= \dfrac{\partial\mathbf{f}\cdot \mathbf{n}}{\partial\mathbf{U}},
     $$
     and with 
     $$\bbN^{-1}=\sum_{K, \bsigma\in K}  \bigg ( \big (\bbK\bbn_\bsigma^K\big )^++\varepsilon \Id\bigg )$$ for some small $\varepsilon\geq 0$, and with the usual definition 
     $$\bbK_\bbn^+=\frac{\bbK\bbn+\vert \bbK\bbn\vert }{2}$$
 \end{itemize}

In appendix \ref{app:1}  we show that
the matrix $\mathbf{N}_{\sigma}=\sum_{K, \sigma\in K}\big (\bbK^K_\bbn)^+ $ is always invertible for internal nodes for both acoustics and the Maxwell system. For this reason, in practice we set $\varepsilon=0$ everywhere except for the boundary points.
Typically, we take  for these nodes $\varepsilon=\vert K\vert/2$.

    \subsection{Higher order extensions}
    We follow the same technique. The moment are updated by
    \begin{equation*}
        \vert K\vert \dfrac{d m_\bmu}{dt}+\int_K \blambda^\bmu(\bbx)\text{div }\bbf\; d\bbx=0
    \end{equation*}
    that is
       \begin{equation}
        \vert K\vert \dfrac{d m_\bmu}{dt}-\int_K \nabla \blambda^\bmu(\bbx)\bbf\; d\bbx+\int_{\partial K} \blambda_\bmu \bbf\bbn \; d\gamma=0
    \end{equation}
    Since $\mathbf{U}$ is of degree $k$ on the boundary of $K$, the boundary integral is computable. Similarly, the surface integrable can be evaluated from a quadrature formula.

    The values of $\mathbf{U}$ at the Gauss-Lobatto points as  evaluated as    in the quadratic case \cite{Abgrall2023b}. 

\section{Discrete involutions}

\subsection{Acoustics and nodal curl involutions}
We consider a triangle $K$ and for the moment the case $k\le 2$. The averages in each element   are updated by \eqref{average}. In particular, 
in the case of acoustics we 
have 
$$\vert K \vert \dfrac{d\overline{\bbu}_K}{dt}+\int_{\partial K} p \bbn \,d\gamma=0.$$
for the velocity.

Consider now $\varphi_\bsigma$, the $\P_1$ basis function associated to a \textit{vertex} $\bsigma$, and  the nodal area 
  $|C_\bsigma|=\sum_{K, \sigma\in K}\vert K\vert$. We consider the following quantity 
\begin{equation}
    \label{rot discret}
\ROT\bbu_\sigma:=\sum_{K, \sigma\in K} \dfrac{|K|}{|C_\bsigma|}\left(\dpar{\varphi_\sigma}{x}\bar u-\dpar{\varphi_\sigma}{x} \bar v\right).
\end{equation}
Since the gradient of $\varphi_\bsigma$ is constant in $K$, we see that
$$\dpar{\varphi_\sigma}{x}\bar u-\dpar{\varphi_\sigma}{x} \bar v=\dfrac{1}{|K|}
\int_K \nabla\varphi_\bsigma^\bot \cdot (\bar u_K,\bar u_K)\; d\bbx$$ that we can also rewrite as 
$$\dpar{\varphi_\sigma}{x}\bar u-\dpar{\varphi_\sigma}{x} \bar v=\dfrac{1}{|K|}
\int_K \nabla\varphi_\bsigma^\bot \cdot \bbu\; d\bbx.$$
Hence
$$\ROT\bbu_\bsigma=\frac{1}{\vert C_\bsigma\vert}\int_\Omega\nabla\varphi_\sigma^\bot \cdot \bbu \; d\bbx.$$
This can also be seen as a "weak" form of the curl:
$$\int_K \nabla\varphi_\bsigma^\bot \cdot \bbu\; d\bbx=\int_K\curl\big ( \varphi_\bsigma \bbu \big )d\bbx-\int_K \varphi_\bsigma \curl \bbu \; d\bbx=\int_{\partial K} \varphi_\sigma \bbu\cdot\btau\,d\gamma -\int_K \varphi_\bsigma \curl \bbu \; d\bbx.$$
Then we take the sum and get
$$\ROT\bbu_\bsigma=\dfrac{\int_K \varphi_\sigma \curl \bbu}{\int_K \varphi_\bsigma}$$ because $\varphi_\sigma$ vanishes on the boundary of the support of $\varphi_\bsigma$ and each edge starting form $\bsigma$ is counted twice with opposite sign: $\btau$ is the tangent of the boundary and is seen positively in each triangle $K$.

We can show the following result.
\begin{proposition}
\begin{equation}
    \label{curl}
\dfrac{d\ROT\bbu_\bsigma}{dt}=0.
\end{equation}
\end{proposition}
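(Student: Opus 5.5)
The plan is to differentiate the definition \eqref{rot discret} in time. Only the element averages $\bar\bbu_K$ enter $\ROT\bbu_\bsigma$, and $\nabla\varphi_\bsigma$ is constant on each $K$, so the point-value update \eqref{point values} never enters. Multiplying by $|C_\bsigma|$ and inserting the average update \eqref{average} for the velocity, $|K|\,\frac{d\bar\bbu_K}{dt}=-\int_{\partial K}p\,\bbn\,d\gamma$, gives
\begin{equation*}
|C_\bsigma|\dfrac{d\ROT\bbu_\bsigma}{dt}=-\sum_{K,\bsigma\in K}\nabla\varphi_\bsigma^\bot\big|_K\cdot\int_{\partial K}p\,\bbn\,d\gamma .
\end{equation*}
The task is then to show that the right-hand side vanishes for the continuous piecewise polynomial pressure $p$, whatever its nodal values and averages. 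In particular, the quadrature on the edges (Simpson, which is exact here) must not spoil the identity.

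Next I rewrite each term by the divergence theorem. Since $\int_{\partial K}p\,\bbn\,d\gamma=\int_K\nabla p\,d\bbx$, each term equals $\int_K\nabla\varphi_\bsigma^\bot\cdot\nabla p\,d\bbx$. Since $\nabla\varphi_\bsigma^\bot$ is constant on $K$, this equals $\int_K \curl\varphi_\bsigma\cdot\nabla p\,d\bbx=\int_K\div(p\,\curl\varphi_\bsigma)\,d\bbx$, using $\div\curl=0$. Up to sign conventions for $\bot$, this is $\int_{\partial K}p\,\curl\varphi_\bsigma\cdot\bbn\,d\gamma=\pm\int_{\partial K}p\,\partial_{\btau}\varphi_\bsigma\,d\gamma$, a purely boundary quantity that involves only the traces of $p$ on edges. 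This is the same computation as the "weak curl" manipulation just before the statement, with $\grad p$ in place of $\bbu$, and it reflects $\rot\grad=0$ at the discrete level.

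Then I sum over the triangles of the patch $C_\bsigma$. On the outer boundary of the patch, the edges opposite $\bsigma$, the $\P^1$ function $\varphi_\bsigma$ is identically zero, so its tangential derivative vanishes. Each interior edge $e$ of the patch, emanating from $\bsigma$, is shared by two triangles. The trace of $p$ is single-valued there because $p$ is globally continuous and the bubble/moment part vanishes on $\partial K$. The tangential derivative of the continuous $\P^1$ function $\varphi_\bsigma$ along $e$ is also single-valued. Because the edge is traversed with opposite orientation $\btau$ from the two sides, the two contributions cancel exactly, and the total is zero.

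The main obstacle I expect is bookkeeping of signs and orientations: matching the paper's $\nabla\varphi^\bot$ (and the typo-ridden formula \eqref{rot discret}) with $\curl\varphi$, and checking that the boundary term is genuinely $p\,\partial_\btau\varphi_\bsigma$ with opposite orientations on the shared edges. I would also check that nothing changes if the average flux uses an exact quadrature of the edge traces: the identity $\int_{\partial K}p\bbn=\int_K\nabla p$ holds for the polynomial $p$ on $K$, and Simpson's rule is exact for quadratics. For the boundary vertices the same argument works only if the patch is closed, so the statement is understood for interior vertices $\bsigma$.
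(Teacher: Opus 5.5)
Your proof is correct and follows the paper's argument. Like the paper, you differentiate only the element averages, replace $\int_{\partial K}p\,\bbn\,d\gamma$ by $\int_K\nabla p\,d\bbx$, integrate by parts on each triangle to get a boundary integral, and cancel the shared-edge contributions over the patch of $\bsigma$. The only difference is that you put the tangential derivative on $\varphi_\bsigma$ (giving $\int_{\partial K}p\,\partial_{\btau}\varphi_\bsigma\,d\gamma$) while the paper keeps it on $p$ (giving $\int_{\partial K}\varphi_\bsigma\,\partial_{\btau}p\,d\gamma$); the two agree up to sign since $\oint_{\partial K}\partial_{\btau}(p\varphi_\bsigma)\,d\gamma=0$, and your restriction to interior vertices is also implicit in the paper's claim that $\varphi_\bsigma$ vanishes on the boundary of its support.
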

\begin{proof}
We have
$$\vert C_\bsigma\vert \dfrac{d\ROT\bbu_\bsigma}{dt}=\sum_{K\bsigma\in K}  \nabla\varphi_\bsigma^\bot \cdot \dfrac{d}{dt}\int_K  \bbu \,d\bbx=
-
\sum_{K\bsigma\in K}  \int_K \nabla\varphi_\bsigma^\bot \cdot \nabla p\; d\bbx
$$
because $\nabla\varphi_\bsigma$ is constant and 
$$\vert K\vert \dfrac{d\bar\bbu_K}{dt}=-\int_{\partial K} p\bbn\,d\gamma=-\int_K\nabla p\; d\bbx$$
because $\bbu$ is smooth in $K$.
Then we see that 
$$\int_K \nabla\varphi_\bsigma^\bot \cdot \nabla p\; d\bbx=\int_{K }\bigg (
\dpar{\varphi_\bsigma}{y}\dpar{p}{x}-\dpar{\varphi_\bsigma}{x}\dpar{p}{y}\bigg )\; d\bbx$$
and since
$$\dpar{\varphi_\bsigma}{y}\dpar{p}{x}-\dpar{\varphi_\bsigma}{x}\dpar{p}{y}=
\dpar{}{y}\bigg ( \varphi_\bsigma\dpar{p}{x}\bigg )-\dpar{}{x}\bigg ( \varphi_\bsigma\dpar{p}{y}\bigg )$$
because in $K$, we can apply Schwarz theorem on $p$ (\ldots), we get
using Stokes formula,
$$\int_K \nabla\varphi_\bsigma^\bot \cdot \nabla p\; d\bbx=\int_{\partial K} \nabla p\cdot \btau \varphi_\bsigma\,d\gamma.$$
Then using the same arguments as before (the tangential derivative of $p$ is counted twice with opposite signs, $\varphi_\bsigma$ is continuous accorss edges and vanishes on the boundary of its support), we see that \eqref{curl} is true.
\end{proof}

\begin{remark}\label{rem:simplex}
$~$
\begin{enumerate}
    \item 
We get one involution per vertex. The proof uses the fact that the gradient of a piecewise linear function is constant per element and thus since the characteristic function of the triangle are used to compute the first moment (the average), we can use this gradient as a test function. 
 For mid points we cannot do the same.
 \item Similarly, the proof does not work for polygons, whatever degree because we would need the basis function attached to the $\bsigma$, and the moments are not expressed with this.
 \end{enumerate}
 \end{remark}

 In the higher degree case, let us denote by  $\blambda^\bmu_{k-1}$ the moment polynomials for $\vert \bmu\vert =k-1$. 
 We can  see that the gradient of $\blambda^\bmu_{k-1}$   is of degree $k-2$: it can be written as a linear combination of the $\blambda^\bmu_{k-2}$ with $\vert\bmu\vert=k-2$. Since these polynomials   are used to define the moments, we can use $\nabla\blambda^\bmu$ as test function. 
 In particular  note that we can introduce the gradients   in the moments of  the approximation 
 of degree $k\ge 3$ of a variable $p$:
 \begin{equation}\label{eq:grad_mom1}
  \dfrac{1}{|K|}
\int_K  \nabla \blambda^\bmu_{k-1}(\bbx) p(\bbx)\,d\bbx=
\sum_{|\bmu|=k-2} \bba_{\bmu} 
\int_K \blambda^\bmu_{k-2}(\bbx) p(\bbx)\,d\bbx =
\sum_{|\bmu|=k-2} \bba_{\bmu} 
\langle m_\bmu, p
\rangle 
 \end{equation}
 for some  set of vector coefficients  $\bba_{\bmu} $,
 and similarly we have
 \begin{equation}\label{eq:grad_mom2}
  \dfrac{1}{|K|}
\int_K  (\nabla \blambda^\bmu_{k-1})^\bot(\bbx) p(\bbx)\,d\bbx=
\sum_{|\bmu|=k-2} \mathbf{b}_{\bmu} 
\int_K \blambda^\bmu_{k-2}(\bbx) p(\bbx)\,d\bbx =
\sum_{|\bmu|=k-2} \mathbf{b}_{\bmu} 
\langle m_\bmu, p
\rangle 
 \end{equation}
 for some  set of vector coefficients  $\mathbf{b}_{\bmu} $.
 
 Based on this observation we can prove the following  result.
 \begin{proposition}\label{prop:curlHO}
 Let $k\geq 3$ and $\bmu=(\mu_1,\mu_2,\mu_3)$ with $\vert \bmu\vert=\mu_1+\mu_2+\mu_3=k-1\geq 2$.
 We set $$B_\bmu=\frac{k-1}{\mu_1!\mu_2!\mu_3!}\blambda^\bmu_{k-1},$$ the B\'ezier polynomial on $K$
     Then the weak curls $\ROT\bbu_\bsigma$ defined by
     $$
     \int_{\Omega}\blambda_{k-2}^{\bmu}(\bbx)\ROT\bbu_\bsigma\,d\bbx=
     \int_\Omega  \nabla B_\bmu ^\bot \cdot \bbu\,d\bbx$$
     verify
      $$
      \dfrac{d}{dt} \ROT\bbu_\bsigma =0.
      $$
      when $\bbu$ is the degree $k\ge 3$ active flux
          approximation of velocity in the  acoustics  system.
 \end{proposition}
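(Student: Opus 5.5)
The plan mirrors the proof of the previous Proposition for $k\le 2$, with the piecewise linear hat function replaced by a globally continuous piecewise Bézier polynomial $B_\bmu$ of degree $k-1$. The first task is to make sense of $B_\bmu$ as a global test function. For a fixed multi-index $\bmu$ attached to a Lagrange/Bézier node $\bsigma$ of degree $k-1$, $B_\bmu$ is defined elementwise on every triangle containing that node. We extend it by zero elsewhere; Bézier polynomials are continuous across edges, so this extension is a continuous, piecewise $\P^{k-1}$ function $\phi_\bsigma$ vanishing on the boundary of its support. By \eqref{eq:grad_mom2}, on each $K$ the quantity $\nabla B_\bmu^\bot$ is a combination of the moment polynomials $\blambda^{\bnu}_{k-2}$, $|\bnu|=k-2$. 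Hence
\[
\int_K \nabla B_\bmu^\bot\cdot \bbu\,d\bbx=|K|\sum_{|\bnu|=k-2}\mathbf b_{\bnu}\cdot\langle m_\bnu,\bbu\rangle ,
\]
so the right-hand side of the definition of $\ROT\bbu_\bsigma$ depends only on the moments of $\bbu$. Their evolution is given exactly by the moment update of the higher-order scheme, and the point values do not enter. The left-hand side is a fixed nonzero constant times $\ROT\bbu_\bsigma$, since $\int\blambda^{\bmu}_{k-2}>0$. It therefore suffices to show
\[
\frac{d}{dt}\int_\Omega \nabla\phi_\bsigma^\bot\cdot\bbu\,d\bbx=0 .
\]

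The second step is to differentiate in time using the moment equations. For acoustics the velocity moments satisfy $|K|\frac{d}{dt}\langle m_\bnu,\bbu\rangle=-\int_K\blambda^\bnu_{k-2}\nabla p\,d\bbx$. This is the integrated-by-parts form written in the paper; the boundary integral is evaluated exactly and $p$ is smooth inside $K$. Recombining with the same coefficients $\mathbf b_\bnu$ gives
\[
\frac{d}{dt}\int_K\nabla B_\bmu^\bot\cdot\bbu\,d\bbx=-\int_K\nabla B_\bmu^\bot\cdot\nabla p\,d\bbx .
\]
If the volume quadrature is not exact, this identity holds at the discrete level only when the quadrature is exact for degree $2k-3$ polynomials. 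I would state that assumption; in the exact-integration setting it is automatic.

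The third step is the curl-of-gradient argument, which goes exactly as before. Inside $K$, Schwarz's theorem gives
\[
\nabla B_\bmu^\bot\cdot\nabla p=\partial_y(B_\bmu\,\partial_xp)-\partial_x(B_\bmu\,\partial_yp),
\]
so Stokes' formula yields
\[
\int_K\nabla B_\bmu^\bot\cdot\nabla p\,d\bbx=\int_{\partial K}B_\bmu\,\nabla p\cdot\btau\,d\gamma .
\]
Summing over the triangles in the support of $\phi_\bsigma$, each interior edge contributes twice with opposite tangents. On such an edge, $\phi_\bsigma$ is single-valued, and so is $\nabla p\cdot\btau$, because $p$ is globally continuous and its trace on the edge is the common degree-$k$ polynomial. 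The contributions therefore cancel. On the boundary of the support $\phi_\bsigma=0$, so those terms vanish too, and the total derivative is zero.

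The main obstacle is the first step. One must check that $B_\bmu$, defined elementwise, is a legitimate global continuous function with vanishing trace on the support boundary. This holds for nodes on edges or vertices, where the support is the patch of triangles sharing the node. For interior indices ($\mu_i>0$ for all $i$) the support is a single $K$ and $B_\bmu$ vanishes on $\partial K$. One must also check that the coefficient identity \eqref{eq:grad_mom2} is compatible with the moment evolution equation, i.e. that the time derivative commutes with the linear recombination of moments. I would verify this explicitly for $k=3$ using $\nabla\lambda_i$ constant and $\nabla(\lambda_i\lambda_j)=\lambda_j\nabla\lambda_i+\lambda_i\nabla\lambda_j$, and then argue in general by the same linearity.
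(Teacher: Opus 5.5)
Your proposal is correct and follows the paper's proof essentially step for step. You rewrite $\int_K(\nabla B_\bmu)^\bot\cdot\bbu\,d\bbx$ through the degree-$(k-2)$ moments via \eqref{eq:grad_mom2}, use the moment update to obtain $-\int_K(\nabla B_\bmu)^\bot\cdot\nabla p\,d\bbx$, apply Stokes on each $K$, and cancel the edge terms using the continuity of $B_\bmu$ and of $\partial p/\partial\btau$ together with the vanishing of $B_\bmu$ on the boundary of its support. As a minor aside, your volume-quadrature caveat is not needed: after recombining the integrated-by-parts moment equations, the volume contribution is $\int_K p\,\div\big((\nabla B_\bmu)^\bot\big)\,d\bbx$, whose integrand vanishes identically, so any volume rule gives the same identity.
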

 \begin{proof}
     Applying twice  \eqref{eq:grad_mom2} we can write
     as  
     $$  
     \dfrac{d}{dt}\int_K \big (\nabla\blambda^\bmu_{k-1}\big )^\bot \cdot \bbu\,d\bbx=\sum\limits_{\vert \mu \vert =k-2} \mathbf{b}_{\bmu}\cdot 
      \dfrac{d}{dt}\int_K  \blambda^\bmu_{k-2} \bbu\,d\bbx=
     - \sum\limits_{\vert \mu \vert =k-2} \mathbf{b}_{\bmu} \cdot \int_K  \blambda^\bmu_{k-2} \nabla p\,d\bbx=
     -   \int_K \big (\nabla\blambda^\bmu_{k-1}\big )^\bot \cdot \nabla p\,d\bbx
     $$
having also used
$$
 \dfrac{d}{dt}\int_K  \blambda^\bmu_{k-2} \bbu\,d\bbx =
  - \int_K  \blambda^\bmu_{k-2}\nabla p\,d\bbx
$$
This allows to write
     $$
     \dfrac{d}{dt}\int_K \big (\nabla\blambda^\bmu_{k-1}\big )^\bot \cdot \bbu\,d\bbx=-\int_{\partial K} \blambda^\bmu_{k-1} \dpar{p}{\btau}\; d\gamma.
     $$
     Then
     \begin{equation*}
     \begin{split}\int_\Omega \blambda^\bmu_{k-2} \dfrac{d\ROT\bbu_\bsigma}{dt}&=\sum_{K\subset \text{supp}(B_\bmu)}\int_K \big (\nabla B_\bmu\big )^\bot\cdot \dfrac{d\bbu}{dt}\\
     &=-\sum_{K\subset \text{supp}(B_\bmu)}\int_K \big (\nabla B_\bmu\big )^\bot\cdot \nabla p\; d\bbx\\
     &=-\frac{k-1}{\mu_1!\mu_2!\mu_3!}\sum_{K\subset \text{supp}(B_\bmu)}\int_{\partial K}\blambda^\bmu_{k-1} \dpar{p}{\btau}\\
     &=0
     \end{split}
     \end{equation*}
     because on the internal edges, $\blambda^\bmu_{k-1}$ is continuous and $\dpar{p}{\btau}$ is counted twice with opposite sign, and on the boundary edges, $\blambda^\bmu_{k-1}=0$.
 \end{proof}

\begin{remark}\label{rem:3d}
The above results provides a justification of why AF works so well on triangle for acoustics: in the quadratic, we get $n_v$ involutions ($n_v=3$ being the number of vertices). In the case of $k>2$, we get $n_v+(k-2)n_{edges}= 3 + 3(k-2)$ involutions, where $n_{edge}=3$ is the number of edges.

    This result can be easily be extended to the 3D case on thetrahedra. The number of involutions will be $n_v+\max(k-2,0)n_{edge}+\max(k-3,0)n_{faces}$. Here $n_v=4$, and $n_{face}=4$ is the number of faces, while $n_{edge}=6$.
\end{remark}

\subsection{Maxwell equations and nodal div involutions}

We proceed in a similar manner,
and  start from  the case $k\le 2$. 
From \eqref{average}, the average magnetic field is evolved according to  
$$\vert K \vert \dfrac{d\overline{\bbB}_K}{dt}=-\int_{\partial K} E \btau\,d\gamma =- \int_{\partial K} \curl E\,d\bbx.$$
As before, consider $\varphi_\bsigma$, the $\mathbb{P}_1$ bases associated to the vertex $\bsigma$, and define the quantity
\begin{equation}
    \label{div discret}
\DIV\bbB_\sigma:=\sum_{K, \sigma\in K} \dfrac{|K|}{|C_\bsigma|} \bar\bbB_K \cdot \nabla\varphi_\bsigma.
\end{equation}
Using the fact that $\nabla\varphi_\bsigma$ is constant, we can see that  the above defines a   "weak" approximation of the divergence:
$$\int_K \nabla\varphi_\bsigma  \cdot \bbB\; d\bbx=\int_K\text{div}\big ( \varphi_\bsigma \bbB \big )d\bbx-\int_K \varphi_\bsigma \text{div} \bbB \; d\bbx=\int_{\partial K} \varphi_\sigma \bbB\cdot\bbn\,d\gamma -\int_K \varphi_\bsigma \text{div} \bbB \; d\bbx.$$
Then we take the sum and get
$$\DIV\bbB_\bsigma=\dfrac{\int_K \varphi_\sigma \text{div} \bbB\,d\bbx}{\int_K \varphi_\bsigma d\bbx}$$ because $\varphi_\sigma$ vanishes on the boundary of the support of $\varphi_\bsigma$ and each edge starting form $\bsigma$ is counted twice with opposite sign: $\bbn$ is the normal of the boundary and is seen positively in each triangle $K$.

We can prove the following.
\begin{proposition}
\begin{equation}
    \label{div}
\dfrac{d\DIV\bbB_\bsigma}{dt}=0.
\end{equation}
\end{proposition}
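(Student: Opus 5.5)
The argument follows the proof of \eqref{curl}, with the roles of $\nabla\varphi_\bsigma^\bot$ and $\nabla p$ replaced by $\nabla\varphi_\bsigma$ and $\curl E$. First, since $|C_\bsigma|$ is a fixed geometric quantity and $\nabla\varphi_\bsigma$ is constant on each $K$, differentiating \eqref{div discret} in time gives
$$|C_\bsigma|\dfrac{d\DIV\bbB_\bsigma}{dt}=\sum_{K,\bsigma\in K}\nabla\varphi_\bsigma\cdot |K|\dfrac{d\bar\bbB_K}{dt}.$$
Only the average update \eqref{average} enters; the point-value update \eqref{point values} plays no role. Because $E$ is a polynomial, hence smooth, inside $K$, the average update for the Maxwell system is exactly
$$|K|\dfrac{d\bar\bbB_K}{dt}=-\int_{\partial K}E\,\btau\,d\gamma=-\int_K\curl E\,d\bbx.$$
Here I use that the edge integrals with Simpson's rule are exact for quadratic boundary traces, so the discrete flux equals the continuous one. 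Substituting gives
$$|C_\bsigma|\dfrac{d\DIV\bbB_\bsigma}{dt}=-\sum_{K,\bsigma\in K}\int_K\nabla\varphi_\bsigma\cdot\curl E\,d\bbx .$$

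Next, on each $K$ I expand the integrand:
$$\nabla\varphi_\bsigma\cdot\curl E=\dpar{\varphi_\bsigma}{x}\dpar{E}{y}-\dpar{\varphi_\bsigma}{y}\dpar{E}{x}=\dpar{}{x}\Big(\varphi_\bsigma\dpar{E}{y}\Big)-\dpar{}{y}\Big(\varphi_\bsigma\dpar{E}{x}\Big),$$
which relies on the mixed derivatives of $E$ commuting (Schwarz) inside $K$. This is $-\rot(\varphi_\bsigma\nabla E)$. Stokes' formula on $K$ then turns the volume integral into $\pm\int_{\partial K}\varphi_\bsigma\,\dpar{E}{\btau}\,d\gamma$, with the sign fixed by the orientation convention. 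The sign does not matter for the conclusion, since the result will be zero.

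Finally, I sum over the triangles in the support of $\varphi_\bsigma$. On an interior edge of the patch, $\varphi_\bsigma$ is continuous. The tangential derivative $\dpar{E}{\btau}$ is also single-valued there, because $E$ is globally continuous and its trace on the edge is the same polynomial seen from both sides. The tangent appears with opposite orientation in the two neighbouring triangles, so these contributions cancel pairwise. On the outer boundary of the patch, $\varphi_\bsigma=0$. Hence the sum vanishes and \eqref{div} follows.

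The step needing the most care is the cancellation across interior edges. One must check that the tangential derivative, unlike the full gradient of $E$, is continuous across edges even though the element bubble and moment contributions differ on each side. This holds because those contributions vanish on $\partial K$, so the trace of $E$ on an edge depends only on the shared Gauss--Lobatto values. For a vertex $\bsigma$ on $\partial\Omega$ the outer edges do not carry $\varphi_\bsigma=0$, and the argument only holds for interior vertices, or for boundary vertices under suitable boundary conditions. I would state the result for interior vertices.
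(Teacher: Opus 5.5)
Your proof is correct and follows the paper's argument step for step: you differentiate the nodal average, substitute the exact average update $|K|\,d\bar\bbB_K/dt=-\int_K\curl E\,d\bbx$, rewrite $\nabla\varphi_\bsigma\cdot\curl E$ as a rotational using Schwarz, apply Stokes, and cancel the tangential-derivative edge terms across the patch. Two small remarks: the expression $\dpar{}{x}(\varphi_\bsigma\dpar{E}{y})-\dpar{}{y}(\varphi_\bsigma\dpar{E}{x})$ equals $+\rot(\varphi_\bsigma\nabla E)$, not $-\rot$ (harmless, as you note), and your restriction to interior vertices (or suitable boundary conditions) makes explicit an assumption that the paper's phrase ``vanishes on the boundary of its support'' leaves implicit.
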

\begin{proof}
We have
$$\vert C_\bsigma\vert \dfrac{d\DIV\bbB_\bsigma}{dt}=\sum_{K\bsigma\in K}  \nabla\varphi_\bsigma \cdot \dfrac{d}{dt}\int_K  \bbB \,d\bbx=
-
\sum_{K\bsigma\in K}  \int_K \nabla\varphi_\bsigma \cdot \curl E\; d\bbx
$$
having used the evolution of $\bar\bbB_K$. 
Using Schwarz theorem for $E$, we see that 
$$\int_K \nabla\varphi_\bsigma  \cdot \curl E\; d\bbx=\int_{K }\bigg (
\dpar{\varphi_\bsigma}{x}\dpar{E}{y}-\dpar{\varphi_\bsigma}{y}\dpar{E}{x}\bigg )\; d\bbx
= 
\dpar{}{x}\bigg ( \varphi_\bsigma\dpar{E}{y}\bigg )-\dpar{}{y}\bigg ( \varphi_\bsigma\dpar{E}{x}\bigg ).
$$
Finally we obtain
$$\int_K \nabla\varphi_\bsigma  \cdot \curl E\; d\bbx=
\int_{\partial K} \nabla E\cdot \btau \varphi_\bsigma.$$
Then using the same arguments as before (the tangential derivative of $E$ is counted twice with opposite signs, $\varphi_\bsigma$ is continuous across edges and vanishes on the boundary of its support), we see that \eqref{div} is true.
\end{proof}

So we have a div involution for each vertex, with the same limitations already pointed out in Remark \ref{rem:simplex}.

For the high order case $k\ge 3$, we can proceed as before, and use the relation between moments $\lambda^\bmu$ of different degrees, and in particular \eqref{eq:grad_mom1}.
Based on this, we can show the following.

\begin{proposition}\label{prop:divHO}
 Let $k\geq 3$ and $\bmu=(\mu_1,\mu_2,\mu_3)$ with $\vert \bmu\vert=\mu_1+\mu_2+\mu_3=k-1\geq 2$.
 We set $$B_\bmu=\frac{k-1}{\mu_1!\mu_2!\mu_3!}\blambda^\bmu_{k-1},$$ the B\'ezier polynomial on $K$
     Then the weak divs $\DIV\bbB_\bsigma$ defined by
     $$
     \int_{\Omega}\blambda_{k-2}^{\bmu}(\bbx)\DIV\bbB_\bsigma\,d\bbx=
     \int_\Omega  \nabla B_\bmu \cdot \bbB\,d\bbx$$
     verify
      $$
      \dfrac{d}{dt} \DIV\bbB_\bsigma =0.
      $$
      when $\bbB$ is the degree $k\ge 3$ active flux
          approximation of Magnetic field in the  Maxwell  system.
 \end{proposition}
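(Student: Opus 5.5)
The plan is to mirror the proof of Proposition \ref{prop:curlHO}, replacing the rotated gradient $(\nabla B_\bmu)^\bot$ by $\nabla B_\bmu$ and the acoustic relation $\partial_t\bbu=-\nabla p$ by the Maxwell relation $\partial_t\bbB=-\curl E$. We work element by element. On a triangle $K$ in the support of $B_\bmu$, each component of $\nabla\blambda^\bmu_{k-1}$ is a polynomial of degree $k-2$. By \eqref{eq:grad_mom1} it is therefore a combination $\sum_{|\bnu|=k-2}\bba_\bnu\blambda^\bnu_{k-2}$, so $\int_K\nabla\blambda^\bmu_{k-1}\cdot\bbB\,d\bbx$ is a fixed linear combination of the moments $\langle m_\bnu,\bbB\rangle$ of the two components of $\bbB$.

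These moments evolve through the moment update of Section 3.2. With the Maxwell flux this update reads
$$\vert K\vert\frac{d}{dt}\langle m_\bnu,\bbB\rangle=-\int_K\blambda^\bnu_{k-2}\,\curl E\,d\bbx.$$
Here $E$ is a polynomial inside $K$, so $\mathrm{div}\,\bbf$ is exactly $\curl E$ for the $\bbB$ components. If the boundary and volume integrals in the update are evaluated exactly, as the paper assumes, this holds identically. Recombining with the same coefficients $\bba_\bnu$ (the second use of \eqref{eq:grad_mom1}) gives
$$\frac{d}{dt}\int_K\nabla\blambda^\bmu_{k-1}\cdot\bbB\,d\bbx=-\int_K\nabla\blambda^\bmu_{k-1}\cdot\curl E\,d\bbx.$$

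Next, as in the $k\le2$ Maxwell proposition, we use Schwarz's theorem for the smooth $E$ on $K$:
$$\nabla\phi\cdot\curl E=\partial_x(\phi\,\partial_yE)-\partial_y(\phi\,\partial_xE).$$
Stokes' formula then turns the volume integral into $\int_{\partial K}\blambda^\bmu_{k-1}\,\dpar{E}{\btau}\,d\gamma$, up to the overall sign fixed by the orientation convention. Multiplying by the constant in $B_\bmu$ and summing over $K\subset\mathrm{supp}(B_\bmu)$ gives
$$\int_\Omega\blambda^\bmu_{k-2}\frac{d\DIV\bbB_\bsigma}{dt}=\mp\frac{k-1}{\mu_1!\mu_2!\mu_3!}\sum_K\int_{\partial K}\blambda^\bmu_{k-1}\dpar{E}{\btau}\,d\gamma.$$
On interior edges this sum vanishes for three reasons: $E$ is globally continuous, so its tangential derivative is single valued; $B_\bmu$ is continuous across the edge, understood as the global Bézier function associated with a Lagrange/Bézier node; and the edge is traversed twice with opposite orientations. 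On the boundary of the support, $B_\bmu$ vanishes. Since the left-hand side vanishes for the defining test function, $\frac{d}{dt}\DIV\bbB_\bsigma=0$.

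The main obstacle is to make the objects precise. First, $B_\bmu$ must be read as a globally continuous piecewise Bézier function on its patch, so that $\nabla B_\bmu$ is legitimate elementwise and the edge cancellation holds. This works when the node lies on an edge or at a vertex; interior nodes give supports in a single triangle, where $B_\bmu$ vanishes on $\partial K$ whenever all $\mu_i>0$. Second, on each $K$ one must check that $\nabla B_\bmu$ lies in $(\P^{k-2})^2$, which makes it an admissible test function for the moments. That is exactly \eqref{eq:grad_mom1}, so it can be invoked directly.
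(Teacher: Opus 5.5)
Your proposal is correct and takes essentially the same route as the paper. You write $\int_K \nabla\blambda^\bmu_{k-1}\cdot\bbB\,d\bbx$ as a combination of moments via \eqref{eq:grad_mom1}, use the moment update to get $-\int_K\nabla\blambda^\bmu_{k-1}\cdot\curl E\,d\bbx$, turn this by Schwarz and Stokes into $-\int_{\partial K}\blambda^\bmu_{k-1}\,\partial_{\btau}E\,d\gamma$, and cancel over interior edges of the support of $B_\bmu$; your remarks on reading $B_\bmu$ as a globally continuous piecewise B\'ezier function and on exact quadrature only make explicit what the paper's proof assumes.
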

 \begin{proof}
     The proof is very similar to the one of \ref{prop:curlHO}:
  the repeated application  of \eqref{eq:grad_mom1}, and the evolution of the moments of the magnetic field   
   allow to show 
     $$
     \dfrac{d}{dt}\int_K \big (\nabla\blambda^\bmu_{k-1}\big )  \cdot \bbB\,d\bbx=-\int_{\partial K} \blambda^\bmu_{k-1} \dpar{E}{\btau}\; d\gamma.
     $$
<     Then, using the definition of the weak divergence moments, and the same continuity arguments of the tangent derivatives used before  we show that 
     \begin{equation*}
     \begin{split}\int_\Omega \blambda^\bmu_{k-2} \dfrac{d\DIV\bbB_\bsigma}{dt}
     &=-\frac{k-1}{\mu_1!\mu_2!\mu_3!}\sum_{K\subset \text{supp}(B_\bmu)}\int_{\partial K}\blambda^\bmu_{k-1} \dpar{E}{\btau}\\
     &=0.
     \end{split}
     \end{equation*}
 \end{proof}

 As for acoustics, these results justify the nice behavior of Active Flux for the Maxwell system. This is related to the
 involutions embedded in the method. Note that 
 the same  counting of Remark \ref{rem:3d}
 also applies to Maxwell, both in two and three space dimensions.

\section{Numerical illustrations.}
The test cases are taken from \cite{VPerrier}. 

\subsection{Linear acoustics}
\subsubsection{Steady vortex}
The first test case is a steady vortex. We take
\begin{subequations}\label{eq:steady}
\begin{equation}
\begin{split}
\bbu(\bbx)&=(-\dpar{\psi}{y}, \dpar{\psi}{x})\\
p(\bbx)&=0
\end{split}
\end{equation} with
\begin{equation}\psi(\bbx)=e^{-0.5\Vert \bby\Vert}, \quad \bby=\frac{\bbx}{0.15}
\end{equation}
\end{subequations} on the domain $[-1,1]^2$. The final time is set to 3.
The temporal scheme is the third order RK-SSP scheme. Clearly, the vorticity evaluated at the vertices evaluated
from \eqref{rot discret} stays invariant in time, up to truncation errors.
\begin{figure}[h]
\begin{center}
   \subfigure[Problem \eqref{eq:steady}]{ \includegraphics[width=0.5\textwidth]{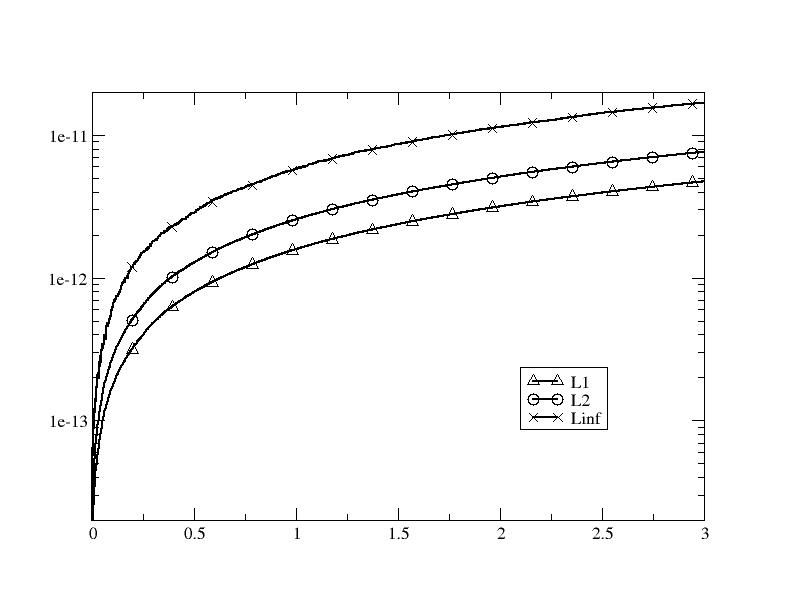}}\subfigure[Problem \eqref{eq:vortex periodique}]{\includegraphics[width=0.5\textwidth]{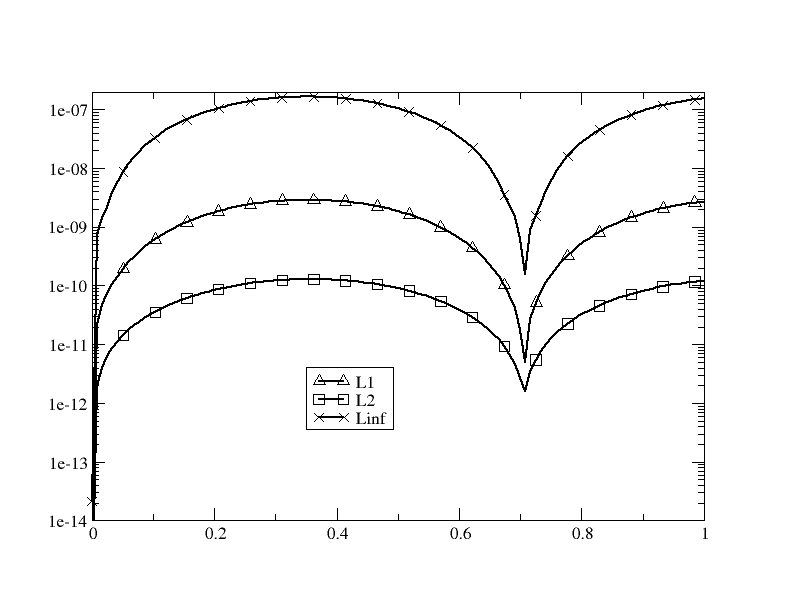}}
\end{center}
\caption{\label{fig:erreur} $L^1$, $L^2$ and $L^\infty$ norm of $\ROT(\bbu-\bbu_0)$ with $\ROT\bbv$ defined by \eqref{rot discret}. This is  evaluated at the vertices.}
\end{figure}

\subsubsection{Unsteady vortex}
The initial solution is the sum of $(p_0,u_0, v_0)^T$ defined as  (with $\bbx=(x,y)$)
\begin{equation*}
\begin{split}
p_0(\bbx)&=\frac{\omega}{c}\sin\big ( k_{\bot}\pi  y\big)\cos\big (k_{\parallel}\pi x)\\
u_0&=\pi k_{\parallel} \cos\big (k_{\bot}\pi y\big )\sin(k_{\parallel}\pi x\big ), \\
v_0&=\pi k_{\bot.      } \sin\big( k_{\bot}\pi y\big )\cos(k_{\parallel}\pi x\big ) 
\end{split}
\end{equation*} and the perturbation:
\begin{equation*}
\begin{split}
\delta p&=0\\
\delta u&=-2 K_0\alpha \bar y\frac{e^{-\alpha/(1-(\bar r)^2)}}{ (1-(\bar r)^2)^2}\\
\delta v&=2 K_0\alpha \bar x\frac{e^{-\alpha/(1-(\bar r)^2)}}{ (1-(\bar r)^2)^2}\\
\end{split}
\end{equation*}
if $\bar r= \Vert \bbx\Vert/r_0<1$ and $(\delta u, \delta u, \delta v)=0$ else.
The domain is $[0,1]^2$, $k_\bot=k_\parallel=2$, $\omega=\pi c\sqrt{k_\bot^2+k_\parallel^2}$, $c=1$, and $K_0=100$, $\alpha=4$, $r_0=0.35$.

We see that $\curl\bbu_0=0$ and $\curl \delta \bbu\neq 0$, so it is a way to see how is preserved the curl, or not. The exact solution is
\begin{equation}\label{eq:vortex periodique}
\big (p(\bbx,t), \bbu(\bbx,t)\big )^T=\big ( p, \bbu_0+\delta \bbu)^T(x, y-\omega t\big ).\end{equation}
We compute the errors and the variation of the discrete curl at time $T_{fin}=1$. For the values of the parameters, the period is $T=1/\sqrt{2}$, and we clearly see this on Figure \ref{fig:erreur}.

\begin{table}[h]
\begin{center}
\begin{tabular}{|c|cc|}
\hline
$h$ & $L^1$ error & slope\\
\hline
   $7.22\; 10^{-2}  $&  $ 6.78\; 10^{-2} $& -\\
   $ 3.61\; 10^{-2} $&  $ 8.83\; 10^{-3} $& $2.94 $ \\
    $1.80\; 10^{-2}  $&  $ 1.122\;10^{-3} $& $2.97 $\\
    $4.51\; 10^{-3}  $&  $ 2.01\; 10^{-5} $& $2.90 $\\
    \hline
\end{tabular}
\end{center}
\caption{Error for the average values, problem \eqref{eq:vortex periodique}.}
\end{table}

We see that we recover the expected order, and the initial curl is well preserved, point-wise, as it can be seen on Figure \ref{fig:erreur}-(b).

\subsection{Maxwell equations}
\subsubsection{Steady problem}
Here, we take, in $[-0.5,0.5]^2$
\begin{equation}\label{maxwell:1}
\begin{split}
E&=0\\
\bbB&=\nabla \psi
\end{split}
\end{equation}
where
\begin{equation}\label{psi}\psi(\bbx)=K_0e^{-\alpha \Vert \bbx-\bbx_c\Vert/r_0}\end{equation}
with again $\bbx=(x,y)$, $\bbx_c=0$, $K_0=1$, $r_0=0.15$ and $\alpha=0.5$.
 
 The $L^1$, $L^2$ and $L^\infty$ norm of $\DIV \bbB$ are shown on figure \ref{fig:div}.
 \begin{figure}[h]
 \begin{center}
 \subfigure[]{ \includegraphics[width=0.45\textwidth]{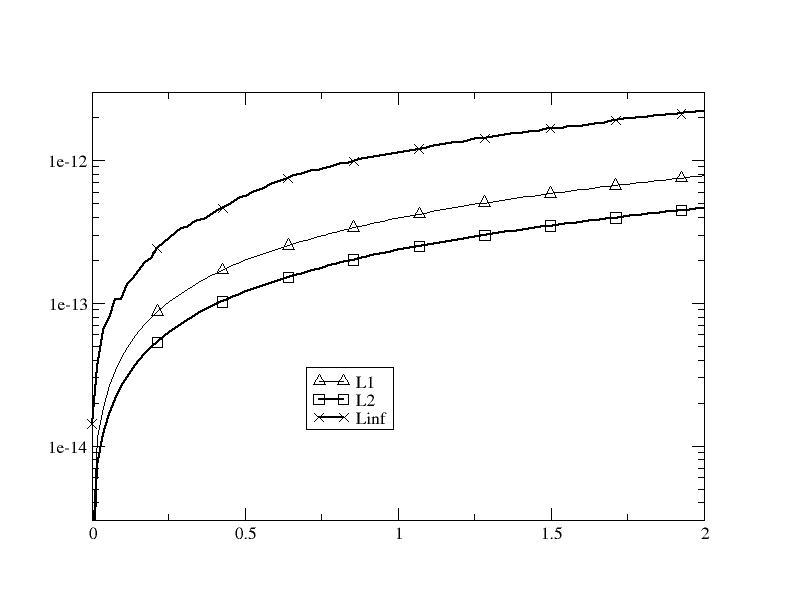}}
\subfigure[]{\includegraphics[width=0.45\textwidth]{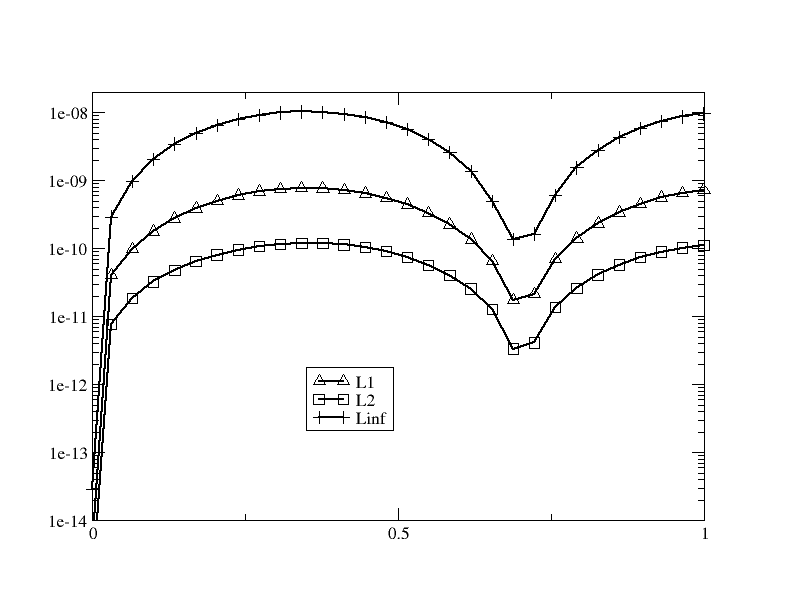}}
\end{center}
\caption{\label{fig:div} (a): Time evolution of $\DIV \bbB$ for problem \eqref{maxwell:1}, (b): Time evolution of $\DIV\big ( \bbB-\bbB_0\big )$ for problem \eqref{maxwell:2}. $\DIV\bbb$ is defined by \eqref{div discret}. This is evaluated at the vertices.
}
\end{figure}
The divergence is very well preserved.

\subsubsection{Unsteady problem}
On $[0,1]^2$ with periodic boundary conditions, we consider the initial conditions
\begin{equation}\label{maxwell:2}
(E_0,\bbB_0)^T=(e, \bbb)^T+(0,\delta \bbb)^T
\end{equation}
where
\begin{equation}
\begin{split}
e&=\omega \cos\big(k_\bot\pi y\big)\sin\big(k_\parallel \pi x\big)\\
B_1&=-k_\bot \pi \sin\big(k_\bot \pi y\big)\cos\big(k_\parallel \pi x\big)\\
B_2&=k_\parallel\pi\cos\big(k_\bot \pi y\big)\sin\big(k_\parallel \pi x\big )
\end{split}
\end{equation}
and
\begin{equation}
\delta \bbb=\left \{\begin{array}{ll}
\nabla \psi & \text{ if} \Vert \bbx\Vert <r_0\\
0& \text{ else}
\end{array}
\right.
\end{equation}
We have used the following parameters and functions:
\begin{itemize}
\item $k_\bot=k_\parallel=2$, $\omega=\pi \sqrt{ k_\bot^2+k_\parallel^2}$,
\item $\psi$ as in \eqref{psi} with $K_0=100$, $r_0=0.35$ and $\bbx_c=(0.5,0.5)^T$
\end{itemize}

The exact solution is given by $(E,\bbB)(x,y,t)=(E_0,\bbB_0)(x,y-\omega t)$. With these parameters, the period is $1/\sqrt{2}$. 
The divergence preservation is tested on a coarse mesh with only $4029$ points, this is shown on figure \ref{fig:div}-(b). The error in the $L^1$ norm is given on Table \ref{table:div}. The error is as expected. 
\begin{table}[h]
\begin{center}
\begin{tabular}{|c|cc|}
\hline
$h$ & $L^1$ error & slope\\
\hline
  $ 3.055\;10^{-2} $&    $2.369\;10^{-2} $&-\\
   $ 1.522\;10^{-2} $ &   $2.932\;10^{-3}  $& $3.01 $ \\
   $ 7.637\;10^{-3}  $&   $3.626\;10^{-4}  $& $3.01 $\\
   $ 3.818\;10^{-3}  $&  $2.278\;10^{-5}   $& $4.02 $\\
    \hline
\end{tabular}
\end{center}

\caption{\label{table:div} Error for the average values, problem \eqref{maxwell:2}.}
\end{table}
\section{Conclusions and perspectives}
We have studied the involution  properties of a semi-discrete version of the Active Flux method
for both linear acoustics and the Maxwell equations.
We have explained why, in the case of a mesh made of triangles in 2D and tetrahedrons in 3D, the semi-discrete Active flux methods preserves very well the structure of curl free or solenoidal solutions (depending on the system).  The analysis provides explicit expressions of the nodal discrete curl and divergence operators
preserved by the scheme, and applies both to the 
 semi-discrete version,  as well as to the fully discrete one, as long as explicit time stepping is used. 

This  work however does not tell the whole story. As remarked along the text,   the analysis  does not apply
as it is to quadrangular and polygonal meshes. Secondly, it does not show whether or not it is true that if the initial condition is the gradient of some scalar function (or curl of some vector potential in the case of the Maxwell equations), then this is also true for the discrete solution  at all times.  The related question
is whether  the   fully discrete method is also a discretization  of the wave equation.

These aspects  will be explored in  future publications.

\section*{Acknowledgements}
 RA acknowledges the support of the Humbold Association via an Humbold Award, as well many stimulating discussions with Professor E. Sonnedr\"ucker, Max Planck Institute f\"ur Plasma Physics, Garching, Germany. MR is a member of the CARDAMOM team at the Inria center at University of Bordeaux.
\appendix
\section{Proof of Lemma \ref{lem:unisolv}\label{app:lemma}}

Let $\alpha_\sigma$, $\sigma\in \mathcal{G}$ and $\alpha_{\bmu'}$, $\bmu'\in \mathcal{I}_k$ be real numbers such that for all $p\in V$
\begin{equation}
\label{test}\sum_{\sigma\in \mathcal{G}}\alpha_{\sigma} \langle\delta_\sigma,p\rangle+\sum_{\bmu'\in \mathcal{I}_k}\alpha_{\bmu'}\langle m_{\bmu'},p\rangle=0.
\end{equation}
We first consider the Lagrange polynomials $L_\sigma$  at  $\sigma$ and define
\begin{equation}
\label{basis function}
\varphi_\sigma=L_\sigma-\sum\limits_{\bmu\in \mathcal{I}_k}
\frac{m_{\bmu}(L_\sigma)}{m_{\bmu}(\psi_{\bmu})}\psi_{\bmu}
\end{equation}
Since $\psi_{\bmu}$ vanishes on $\partial K$ and $L_\sigma$ is a Lagrange polynomial for the Gauss-Lobatto points, we have
$$\langle \delta_{\sigma'},   \varphi_{\sigma}\rangle=\delta_{\sigma}^{\sigma'},$$
and by construction, for all $\bmu\in \mathcal{I}_k$, 
$$\langle m_\bmu, \varphi_{\sigma}\rangle=0.$$
Using this to test \eqref{test}, we see that $\alpha_\sigma=0$ for $\sigma\in \mathcal{G}$.

Then we test with $p=\psi_\bmu$, $\bmu\in \mathcal{I}_k$, that is 
\begin{equation}
\label{basis functions2}\sum_{\bmu'\in \mathcal{I}_k}\alpha_{\bmu'}\int_K  \lambda_1\lambda_2\lambda_3 \blambda^{\bmu'}\blambda^\bmu\; \mathrm d\bbx=0.
\end{equation}
The question reduces to that of the invertibility of the $\vert \II_k\vert \times \vert \II_k\vert$ symmetric matrix
$$A=\begin{pmatrix}\int_K  \lambda_1\lambda_2\lambda_3 \blambda^{\bmu'}\blambda^\bmu\; \mathrm d\bbx
\end{pmatrix}.$$
Taking $X\in \R^{\vert \II_k\vert}$, we see that
$$X^TAX=\int_K \lambda_1\lambda_2\lambda_3 \varphi^2(\bbx)\; \mathrm d\bbx$$
with
$$\varphi=\sum_{\bmu\in \II_k}X_{\bmu} \blambda^\bmu.$$
Since $\{\blambda^\bmu\}_{\bmu\in \II_k}$ is free, the matrix is positive definite. Thus the forms are unisolvant.

For example, for $k=3$ (which is the first non trivial example), {we have, for the moment $ m_\bmu$ with $\bmu=(\mu_1,\mu_2,\mu_3)$ where the $\mu_1+\mu_2+\mu_3=1$, all the $\mu_j$s are all equal to $0$ except for one which is equal to $1$. This leads to 
$$\varphi_\bmu=\lambda_1\lambda_2\lambda_3(a_{1}^k\lambda_1+a_{2}^k\lambda_2+a_3^k\lambda_3).$$} The matrix 
$$A=\frac{1}{6!}\begin{pmatrix}
6 & 4& 4\\
4&6&4\\
4&4&6\end{pmatrix}$$ is circulant. 
{Since} $X{=}(a_1,a_2,a_3)^T=
(\tfrac{1800}{7},-\frac{720}{7},-\frac{720}{7})^T$ is the solution of
$$A X=\begin{pmatrix} 1\\ 0 \\0\end{pmatrix}$$
the polynomials are
\begin{itemize}
    \item Moment $\lambda_1$: $\varphi_{(1,0,0)}=\lambda_1\lambda_2\lambda_3(a_1\lambda_1+a_2\lambda_2+a_3\lambda_3)$.
    \item Moment $\lambda_2$: $\varphi_{(0,1,0)}=\lambda_1\lambda_2\lambda_3(a_3\lambda_1+a_1\lambda_2+a_2\lambda_3)$.
    \item Moment $\lambda_3$: $\varphi_{(0,0,1)}=\lambda_1\lambda_2\lambda_3(a_2\lambda_1+a_3\lambda_2+a_1\lambda_3)$.
    \end{itemize}

\section{Well posedness of the nodal weighting\label{app:1}}

We start with the acoustics system, and  evaluate $\bbK\bbn_\bsigma^K$,
the Jacobian of the system in the direction $\bbn=\bbn^K_\bsigma$: 
$$\bbK^K_\bbn=\begin{pmatrix}
0&0&\bbn_x\\
0&0&\bbn_y\\
\bbn_x&\bbn_y&0
\end{pmatrix}.$$
We write $\bbn=\Vert\bbn\Vert (\cos\theta\sin\theta)$, and the eigenvalues are $0$, $\pm \Vert\bbn\Vert$, the eigenvectors are (for a unit normal)
$$
\bbr_0=\begin{pmatrix}
-\sin\theta\\ \cos \theta \\ 0
\end{pmatrix}, \quad \bbr_+=
\frac{1}{\sqrt{2}}
\begin{pmatrix}
\cos\theta\\
\sin\theta\\
1\end{pmatrix}, \quad \bbr_-=
\frac{1}{\sqrt{2}}
\begin{pmatrix}
-\cos\theta\\
-\sin\theta\\ 
1
\end{pmatrix}.$$ The   factor $1/\sqrt{2}$ ensures
 to have an orthonormal basis of eigenvectors.
Let us   estimate 
$$\sum_{K, \bsigma\in K} (\bbK_{\bbn_\bsigma}^K)^+=\frac{1}{2}\sum_{K, \bsigma\in K} \Vert \bbn^K_\bsigma\Vert (\bbr^K_+\otimes \bbr^K_++\bbr^K_-\otimes \bbr^K_-)=
\frac{1}{2}\bigg ( \sum_{K, \bsigma\in K}\Vert \bbn_\bsigma^K\Vert\bigg )\Id -\frac{1}{2}\sum_{K, \bsigma\in K} \Vert n_\bsigma^K\Vert \bbr^K_0\otimes \bbr^K_0
$$
Since 
$$\bbr^K_0\otimes \bbr^K_0=\begin{pmatrix} \sin^2(\theta_\bsigma^K)& -\sin(\theta_\bsigma^K)\cos(\theta_\bsigma^K) & 0\\
-\sin(\theta_\bsigma^K)\cos(\theta_\bsigma^K) & \cos^2(\theta_\bsigma^K)&0\\
0&0&0\end{pmatrix},$$ we see that 
$$
\sum_{K, \bsigma\in K}\big (\bbK\bbn_\bsigma^K\big )^+
=
\frac{1}{2}\sum_{K, \bsigma\in K}\Vert \bbn_\bsigma^K\Vert
\begin{pmatrix}
     \cos^2(\theta_\bsigma^K) & \sin(\theta_{\bsigma^K})\cos(\theta_{\bsigma^K}) & 0\\
     \sin(\theta_\bsigma^K)\cos(\theta_\bsigma^K)& \sin^2(\theta_\bsigma^K) &0\\
    0&0&1
\end{pmatrix}
$$
Evaluating  explicitly $\bbx^K\bigg ( \sum_{K, \bsigma\in K}\big (\bbK\bbn_\bsigma^K\big )^+\bigg )\bbx$ for $\bbx=(x,y,z)^T$, we  obtain after few manipulations
$$
\begin{aligned}
\bbx^K\bigg ( \sum_{K, \bsigma\in K}\big (\bbK\bbn_\bsigma^K\big )^+\bigg )\bbx 
= &
\sum_{K, \bsigma\in K}
\Vert \bbn_\sigma^K\Vert \left\{
 \bigg ( \cos^2(\theta_\bsigma^K) x^2+2\sin(\theta_\bsigma^K)\cos(\theta_\bsigma^K)xy+\sin^2(\theta_\bsigma^K)y^2\bigg )+  z^2
\right\}\\=&
\sum_{K, \bsigma\in K}
\left\{ 
 \dfrac{1}{\|\mathbf{n}_{\bsigma}^K\|}((x,y)\cdot\mathbf{n}_{\bsigma}^K )^2 +\Vert \bbn_\sigma^K\Vert  z^2
\right\}.
\end{aligned}
$$
 The sum can be zero only if $z=0$, and if 
  $\mathbf{x}\cdot\mathbf{n}_{\bsigma}^K =0$ for all $K$.
    For all internal nodes $\sum_K\bbn_\bsigma^K=0$, so this is never possible   unless $\bbx=0$. So this means the matrix is positive definite, and symmetric, so always invertible.
    For boundary points the matrix bay be only semi-definite,  thus not necessarily invertible, although  only   in exceptional  geometrical circumstances. For that reason, we set $\varepsilon=0$ everywhere except for the boundary points.\\

    For the Maxwell equations we can proceed similarly. First we write the Jacobian
$$\bbK^K_\bsigma=\begin{pmatrix}
0&0&\bbn_y\\
0&0&-\bbn_x\\
\bbn_y&-\bbn_x&0
\end{pmatrix}.$$
We introduce the tangent vectors $\mathbf{t}_{\bsigma}^K=(t_x,t_y ):=( n_y,\, -n_x )$
which trivially satisfy
\begin{equation}\label{eq:tt}
\| \mathbf{t}_{\bsigma}^K\| = 
\| \mathbf{n}_{\bsigma}^K\|  \;,\quad\sum\limits_{K,\bsigma\in K}\mathbf{t}_{\bsigma}^K =0.
\end{equation}
We can  write now
$$
\bbK^K_\bsigma=\begin{pmatrix}
0&0&\mathbf{t}_x\\
0&0&\mathbf{t}_y\\
\mathbf{t}_x&\mathbf{t}_y&0
\end{pmatrix}.$$
 In particular, we find the same expressions  for eigenvectors and egivenvalues, replacing the normal by the tangent. We can also set
  $\mathbf{t}=\| \mathbf{n}\| (\cos\beta, \sin\beta )$, where
   $\beta=\pi/2 -\theta$.
In particular, the eigenvalues are still  $0$, $\pm \Vert\bbn\Vert$,
 while the eigenvectors read (for a unit normal/tangent)
$$
\bbr_0=\begin{pmatrix}
-\sin\beta\\ \cos \beta \\ 0
\end{pmatrix}, \quad \bbr_+=
\frac{1}{\sqrt{2}}
\begin{pmatrix}
\cos\beta\\
\sin\beta\\
1\end{pmatrix}, \quad \bbr_-=
\frac{1}{\sqrt{2}}
\begin{pmatrix}
-\cos\beta\\
-\sin\beta\\ 
1
\end{pmatrix}.$$
Proceeding   exactly as before one  shows that
$$
\sum_{K, \bsigma\in K}\big (\bbK\bbn_\bsigma^K\big )^+
=
\frac{1}{2}\sum_{K, \bsigma\in K}\Vert \bbn_\bsigma^K\Vert
\begin{pmatrix}
     \cos^2(\beta_\bsigma^K) & \sin(\beta_{\bsigma^K})\cos(\beta_{\bsigma^K}) & 0\\
     \sin(\beta_\bsigma^K)\cos(\beta_\bsigma^K)& \sin^2(\beta_\bsigma^K) &0\\
    0&0&1
\end{pmatrix}
$$
and that  
$$ 
\bbx^K\bigg ( \sum_{K, \bsigma\in K}\big (\bbK\bbn_\bsigma^K\big )^+\bigg )\bbx 
=  
\sum_{K, \bsigma\in K}
\left\{ 
 \dfrac{1}{\|\mathbf{n}_{\bsigma}^K\|}((x,y)\cdot\mathbf{t}_{\bsigma}^K )^2 +\Vert \bbn_\sigma^K\Vert  z^2
\right\}.
$$
As before, this quantity can only vanish if simultaneously we have $z=0$,
and $\mathbf{x}\cdot\mathbf{t}_{\bsigma}^K=0$  in all elements.
This is not possible due to the second in \eqref{eq:tt}. As before,
this allows to set $\varepsilon=0$ everywhere except for  boundary points.

\printbibliography
\end{document}

%% file: command.tex
\usepackage{fullpage}

\usepackage{amsmath,amsfonts,amsthm,amssymb}
\usepackage{mathrsfs}
\usepackage{color,xcolor}
\usepackage{ifpdf}
\usepackage{psfrag}
\usepackage{graphicx,graphics,subfigure,epsfig}
\usepackage{url}
\usepackage{hyperref}
\usepackage{xspace}
\usepackage{bm}
\usepackage{dutchcal}

\usepackage[colorinlistoftodos,prependcaption,textsize=small]{todonotes}
\newtheorem{theorem}{Theorem}[section]

\newtheorem{lemma}[theorem]{Lemma}

\newtheorem{proposition}[theorem]{Proposition}
\newtheorem{remark}[theorem]{Remark}

\newcommand{\dpar}[2]{\dfrac{\partial #1}{\partial #2}}

\renewcommand{\div}{\text{ div }}

 \newcommand{\R}{\mathbb R}

\renewcommand{\P}{\mathbb P}

\newcommand{\II}{\mathcal I}

\newcommand{\bba}{\mathbf{a}}
\newcommand{\bbb}{\mathbf{b}}

\newcommand{\bbf}{\mathbf{f}}

\newcommand{\bbn}{\mathbf{n}}

\newcommand{\bbr}{\mathbf{r}}

\newcommand{\bbu}{\mathbf{u}}
\newcommand{\bbv}{\mathbf{v}}

\newcommand{\bbx}{\mathbf{x}}
\newcommand{\bby}{\mathbf{y}}

\newcommand{\bbA}{\mathbf{A}}
\newcommand{\bbB}{\mathbf{B}}

\newcommand{\bbK}{\mathbf{K}}

\newcommand{\bbN}{\mathbf{N}}

\newcommand{\btau}{{\bm\tau}}
\newcommand{\bsigma}{{\bm\sigma}}
\newcommand{\bomega}{{\bm\omega}}

\newcommand{\bmu}{{\bm {\mu}}}
\newcommand{\blambda}{{\bm {\lambda}}}

\newcommand{\dof}{\text{DoFs}}

\newcommand{\Id}{\text{Id }}

\newcommand{\curl}{{ \mathbf{ curl }\;}}
\newcommand{\grad}{{ \mathbf{ grad }\;}}

\newcommand{\rot}{\text{ rot }}
\renewcommand{\div}{\text{ div }}

\newcommand{\DIV}{\textbf{\text{DIV }}}
\newcommand{\ROT}{\textbf{\text{ROT }}}

%% file: GL_tri.tikz
{\pgfkeys{/pgf/fpu/.try=false}%
 \ifdim\dimen1=0pt\ifdim\dimen3=0pt\dimen1=1000sp\dimen3\dimen1
  \else\dimen1\dimen3\fi\else\ifdim\dimen3=0pt\dimen3\dimen1\fi\fi
\begin{tikzpicture}[x=+\dimen1, y=+\dimen3]
{\ifx\XFigu\undefined\catcode`\@11
\def\temp{\alloc@1\dimen\dimendef\insc@unt}\temp\XFigu\catcode`\@12\fi}
\XFigu3946sp
\ifdim\XFigu<0pt\XFigu-\XFigu\fi
\tikzset{inner sep=+0pt, outer sep=+0pt}
\pgfsetfillcolor{black}
\pgftext[base,left,at=\pgfqpointxy{9825}{-8100}] {\fontsize{12}{14.4}\usefont{T1}{ptm}{m}{n}$\xi_3$}
\pgfsetlinewidth{+7.5\XFigu}
\pgfsetcolor{black}
\filldraw  (9750,-7725) circle [radius=+75];
\filldraw  (5475,-2100) circle [radius=+75];
\filldraw  (2175,-6975) circle [radius=+75];
\filldraw  (2775,-8025) circle [radius=+75];
\filldraw  (4725,-3225) circle [radius=+75];
\filldraw  (6300,-3150) circle [radius=+75];
\filldraw  (8925,-6600) circle [radius=+75];
\filldraw  (8400,-7800) circle [radius=+75];
\draw (1425,-8100)--(5475,-2100)--(9825,-7725)--(1425,-8100);
\pgfsetdash{{+60\XFigu}{+60\XFigu}}{++0pt}
\draw (6300,-3150)--(2775,-8025);
\draw (4725,-3225)--(8400,-7800);
\draw (2175,-6975)--(8925,-6600);
\draw (2175,-6975)--(2775,-8025);
\draw (8925,-6600)--(8400,-7800);
\draw (4725,-3225)--(6300,-3150);
\pgftext[base,left,at=\pgfqpointxy{1350}{-8550}] {\fontsize{12}{14.4}\usefont{T1}{ptm}{m}{n}$\xi_0$}
\pgftext[base,left,at=\pgfqpointxy{2775}{-8475}] {\fontsize{12}{14.4}\usefont{T1}{ptm}{m}{n}$\xi_1$}
\pgftext[base,left,at=\pgfqpointxy{8400}{-8325}] {\fontsize{12}{14.4}\usefont{T1}{ptm}{m}{n}$\xi_2$}
\pgfsetdash{}{+0pt}
\filldraw  (1425,-8100) circle [radius=+75];
\end{tikzpicture} }%